\documentclass[11pt]{amsart}
\usepackage[utf8]{inputenc}
\usepackage{amsmath,amssymb,mathtools,bm,comment}

\newtheorem{thm}{Theorem}[section]
\newtheorem{cor}[thm]{Corollary}
\newtheorem{lemma}[thm]{Lemma}

\newtheorem*{theoremA*}{Theorem A}
\theoremstyle{definition}
\newtheorem{defn}[thm]{Definition}
\theoremstyle{remark}
\newtheorem{rem}[thm]{Remark}
\numberwithin{equation}{section}

\def\bN{\mathbf{N}}
\def\bR{\mathbb{R}}

\def\og{\otimes_g}
\def\oa{\otimes_a}
\def\bd{\mathbf{d}}

\def\<{\prec}
\def\>{\succ}
\def\O{{\mathcal O}}
\def\bes{\begin{equation*}}
\def\be{\begin{equation}}
\def\ee{\end{equation}}
\def\ees{\end{equation*}}

\def\a{\alpha}
\def\b{\beta}
\def\c{\gamma}
\allowdisplaybreaks

\def\ba{{\bf a}}
\def\bb{{\bf b}}
\def\bc{{\bf c}}

\def\bd{{\bf d}}

\begin{document}

\title{Asymptotic Analysis of Nonsymmetric Gaussian and Archimedean Compound Means}
\author{Tomislav Buri\'c}
\author{Lenka Mihokovi\'c}
\author{Toni Milas}

\address{Tomislav Buri\'c\newline
\indent University of Zagreb Faculty of Electrical Engineering and Computing\newline
\indent Unska 3, 10000 Zagreb, Croatia}
\email{tomislav.buric@fer.hr}

\address{Lenka Mihokovi\'c\newline
\indent University of Zagreb Faculty of Electrical Engineering and Computing\newline
\indent Unska 3, 10000 Zagreb, Croatia\newline
\indent ORCID: {0000-0002-9561-6021}}
\email{lenka.mihokovic@fer.hr}

\address{Toni Milas\newline
\indent University of Zagreb Faculty of Mechanical Engineering and Naval Architecture\newline
\indent Ivana Lu\v{c}i\'ca 5, 10000 Zagreb, Croatia}
\email{toni.milas@fsb.unizg.hr}

\date{\today}

\subjclass[2020]{26E60, 41A60}
\keywords{Bivariate means; compound means; Gaussian iteration;
Archimedean iteration; asymptotic expansions}

\begin{abstract}
This paper studies asymptotic expansions of the Gaussian and Archimedean compounds of two arbitrary bivariate means that need not be symmetric. Recursive algorithms for the coefficients of such expansions were
previously obtained only in the symmetric case. Here, this restriction is removed and recursions are derived for the general nonsymmetric setting. All coefficient calculations are carried out in the algebra of formal power
series. This introduces an additional case in the recursion, as well as singular configurations in which the formal invariance equation does not determine the coefficients recursively. The formal problem is kept separate from the existence and convergence of the iterative procedures. We also connect the first formal coefficients with Farhi's metric, recall its known global convergence criterion, and derive the corresponding local rates. The algorithms are
illustrated using weighted power means and several known identities. Finally, the method is extended to expansions with sign-dependent coefficients and is applied to the neo-Pythagorean means and to a recently introduced two-parameter family.
\end{abstract}

\maketitle
\markboth{T. Buri\'c, L. Mihokovi\'c and T. Milas}
{Nonsymmetric Gaussian and Archimedean Compound Means}

\section{Introduction and motivation}

Consider a \emph{bivariate mean} $M$, i.e.\ a
function $M\colon \bR^+ \times \bR^+\to\bR^+$
such that
\be\label{def-mean-minmax}
	 \min(s,t) \le M(s,t) \le \max(s,t).
\ee
We define the Gauss iterative algorithm for arbitrary bivariate means $M$ and $N$ as follows:
\bes
	\begin{aligned}
		M_0=s, & \quad N_0=t, \\
		M_{n+1}= M(M_n, N_n),  &\quad   N_{n+1}= N(M_n, N_n),\quad n\in\bN_0.
	\end{aligned}
\ees
If both sequences converge to the same limit, this common value is called the Gaussian compound and is denoted by ${M\og N} (s,t)$. The best-known example is the arithmetic-geometric mean $A\og G$, obtained by iterating the arithmetic and geometric means.

Similarly, we can define the Archimedean compound $M\oa N \,(s,t)$ obtained by the iterative process 
\bes
	\begin{aligned}
	M_0=s, & \quad N_0=t, \\
	M_{n+1}= M(M_n, N_n), &  \quad   N_{n+1}= N(M_{n+1}, N_n),\quad n\in\bN_0,
	\end{aligned}
\ees
if both sequences converge to a common limit. A well-known example of such a mean is the Schwab-Borchardt mean. Related iterations are connected to the theory of elliptic functions; for details, see \cite{JJ}.

Asymptotic expansions have been proven useful in the study of means, especially when an explicit closed form is unknown or too complex to be practical. Continuing this line of work, we first extend the existing algorithms for the coefficients of the asymptotic expansions of the Gaussian and Archimedean compounds to the two-variable, nonsymmetric setting, allowing a joint analysis of both procedures.

		Recall that the (formal) series $\sum_{n=0}^{\infty} a_n \varphi_n(x)$ is said to be an \emph{asymptotic expansion} of a function $f(x)$ as $x\to x_0$,
			with respect to an asymptotic sequence $(\varphi_n(x))_{n\in \bN_0}$, 
			if, for each $N\in\bN_0$, the following holds (\cite{Erd}):
			$$
				f(x) = \sum_{n=0}^{N}a_n \varphi_n(x) +o(\varphi_N(x)).
			$$
In this paper, the asymptotic sequence is fixed from the beginning as
\[
\varphi_n(x)=x^{1-n},\qquad n\in\bN_0,\qquad x\to\infty.
\]
Thus, we consider means $M(s,t)$ that admit an asymptotic power series
expansion of the form
 	\bes
		M(x+s,x+t) \sim x\sum_{n=0}^\infty c_n(s,t)x^{-n},\quad x\to\infty.
	\ees

The algorithms obtained and analysed in \cite{BurElMih-2024} and
\cite{BurMih-Gauss} were derived for means whose asymptotic expansions in the variables $(x-t,x+t)$
contain only odd powers of $x$ (and even powers of $t$). They belong to the so-called symmetric case.
In order to analyse and compare the compounds starting from the same pair of means, we need to
obtain the more general, i.e. nonsymmetric case of the above mentioned algorithms. This case is
more complex than the already analysed one and requires more attention, especially regarding
the conditions on coefficients under which the functional equation (that defines the compound
mean) has the unique solution in the context of formal power series.

Therefore, the main goal of this paper is to extend the results for symmetric
means from \cite{BurElMih-2024,BurMih-Gauss} to the more general case of two
nonsymmetric means. We obtain recursive algorithms for the formal
coefficients of both Gaussian and Archimedean compounds. We identify the
singular cases and then discuss separately what the same coefficients say
about local convergence and how they are related to known global existence
conditions. The new algorithms are applied to classical nonsymmetric means,
particularly weighted power means. Finally, in the last section we use two
formal branches for sign-dependent expansions and study compounds of the
neo-Pythagorean (Greek) means and another recently introduced family.

\section{Preliminaries}

	A notion of asymptotic inequality can be introduced using asymptotic expansions, more precisely, the positivity of the first non-zero coefficient. We recall its definition.
	\begin{defn}[\cite{Vu}]\label{defn-AsymIneq}
	Let $F(s,t)$ be any homogeneous bivariate function such that
	\bes
		F(x+s,x+t)
		=c_{k}(s,t)x^{-k+1}+\O(x^{-k}).
	\ees
	If $c_{k}(s,t)>0$ for all $s$ and $t$, then
	we say $F$ is \textit{asymptotically} greater than zero, and write
	\bes
		F\succ 0.
	\ees
\end{defn}
The same paper explains that an asymptotic inequality is a necessary relation between comparable means. More precisely, if $F\ge0$, then $F\succ 0$. Furthermore, it is sufficient to consider the case $s=-t$ when studying asymptotic inequalities (\cite{El-2015-classical}).

Throughout this paper, we assume that the (homogeneous) means under consideration have, for each fixed $t$, asymptotic expansions as $x\to\infty$ of the following form:
 \begin{align}
 	M(x-t,x+t) &\sim \sum_{n=0}^\infty a_n t^n x^{-n+1}, \label{asymexp-M}\\
 	N(x-t,x+t) &\sim \sum_{n=0}^\infty b_n t^n x^{-n+1},\label{asymexp-N}\\
 	M\og N \,(x-t,x+t) &\sim \sum_{n=0}^\infty \c_n t^n x^{-n+1}, \label{asymexp-MogN} \\
 	M\oa N \,(x-t,x+t) &\sim \sum_{n=0}^\infty \a_n t^n x^{-n+1}. \label{asymexp-MoaN}
 \end{align}

After the usual substitution $u=t/x$, the expansion in the variables
$(x-t,x+t)$ may be treated as a formal power series in $u$. All calculations
are made in $\bR[[u]]$. In particular, equality of two formal expansions means
equality of their coefficient sequences; it does not assert equality of the
corresponding functions. 
 
\begin{rem}\label{rem-initialcoeffs}
The asymptotic inequalities that follow from the defining property of a mean give some useful consequences for the coefficients in the corresponding asymptotic expansion.
\begin{enumerate}
	\item \label{rem-coeff0} Because of the defining property of the mean \eqref{def-mean-minmax}, 
		observed in variables $(x-t,x+t)$ and consequently 
			because of the corresponding inequalities
			\bes
				x-t \le M(x-t,x+t) \le x+t
			\ees
		for $t>0$, and reversed for $t<0$,
		the initial coefficient in the asymptotic expansion of a mean is 
		$a_0=1$, and similarly $b_0=\c_0=1$.
	\item \label{rem-coeff1int} After setting $a_0=1$, the same sequence of asymptotic inequalities gives $a_1\in [-1,1]$.	
	\item \label{rem-coeff1pm1} 
	\label{rem-coeff1pm1} If $a_1=\pm1$, then the same reasoning yields $a_2=0$. 
	An endpoint value $a_1=\pm1$ does not, in the nonsymmetric setting,
force all higher coefficients ($n\ge3$) to vanish. 
\end{enumerate}

\end{rem}

\begin{rem}
In \cite{ElVu-2014-04,ElVu-2014-09}, the authors showed that the coefficients in the general form $M(x+s,x+t)$ can be easily derived from the expansion \eqref{asymexp-M} by using the variables
\bes
	\a=\frac{s+t}{2},\ \b=\frac{t-s}{2}.
\ees
Then we have
\bes
	M(x+s,x+t) =M(x+\a-\b,x+\a+\b) =
	\sum_{n=0}^{\infty}c_n(\a,\b)x^{-n+1},
\ees
where the polynomials $c_n(\a,\b)$ are calculated by
\bes
	c_n (\a,\b)= \sum_{k=0}^{n} a_{n-k} {k-n+1\choose k} \b^{n-k}\a^k.
\ees

\end{rem}

The following fundamental lemma on powers of asymptotic series is essential for our results.

\begin{lemma}[\cite{ChenElVu-2013,Gould-1974}]
	\label{lemma-power}
	Let 
	$$
		g(x)\sim \sum_{n=0}^\infty a_n x^{-n}
	$$
	be an asymptotic expansion of $g(x)$ as $x\to\infty$, with $a_0\neq0$. Then, for every real $r$, we have
	$$
		[g(x)]^{r}\sim \sum_{n=0}^\infty P[n,r,\ba]x^{-n},
	$$
	where $P[0,r,\ba]=a_0^r$ and
	\bes
	        P[n,r,\ba]=\frac1{na_0}\sum_{k=1}^n[k(1+r)-n]a_kP[n-k,r,\ba],\quad n\in\bN.
	\ees
\end{lemma}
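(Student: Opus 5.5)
The plan is to prove the lemma by working entirely in the algebra of formal power series and then transferring the identity back to asymptotic expansions. Put $z=x^{-1}$ and write $g(x)\sim a_0\bigl(1+h(z)\bigr)$ with $h(z)=\sum_{k\ge1}(a_k/a_0)z^k$. Since $h$ has no constant term, the binomial series $(1+h)^r=\sum_{j\ge0}\binom rj h^j$ is a well-defined formal power series: each coefficient of $z^n$ involves only $j\le n$. It therefore defines coefficients $P[n,r,\ba]$ with $P[0,r,\ba]=a_0^r$.

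The first step is to show that these coefficients are exactly the asymptotic coefficients of $[g(x)]^r$. Fix $N$ and write $g(x)=a_0(1+h_N(z)+R_N(x))$, where $h_N$ is the truncation of $h$ at order $N$ and $R_N(x)=o(x^{-N})$. Because $h_N(z)+R_N(x)\to0$, the real function $(1+w)^r$ is analytic near $w=0$, and Taylor's theorem with remainder gives
\[
(1+w)^r=\sum_{j\le N}\binom rj w^j+O(w^{N+1}).
\]
Substituting $w=h_N+R_N$, we have $w=O(x^{-1})$, so the remainder is $O(x^{-N-1})$. Expanding the powers $w^j$ shows that every term containing $R_N$ is $o(x^{-N})$. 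Truncating the polynomial in $z$ at order $N$ then leaves exactly the formal coefficients up to $z^N$. This gives the asymptotic statement for every $N$.

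The second step derives the recursion, using a formal logarithmic derivative. Let $G(z)=\sum a_nz^n$ and $F(z)=G(z)^r=\sum P_nz^n$, where $P_n=P[n,r,\ba]$. Formal differentiation gives $G F'=rG'F$; this holds in $\bR[[z]]$ because both sides equal $rG^rG'$ computed via the binomial series, or equivalently via $\exp(r\log(G/a_0))$. Comparing coefficients of $z^{n-1}$ yields
\[
\sum_{k=0}^{n}a_k(n-k)P_{n-k}=r\sum_{k=0}^{n}k\,a_kP_{n-k}.
\]
We then isolate the $k=0$ term $na_0P_n$ on the left. The remaining terms combine into $\sum_{k=1}^n[k(1+r)-n]a_kP_{n-k}$, since $rk-(n-k)=k(1+r)-n$. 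Dividing by $na_0\ne0$ gives the stated formula.

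The main obstacle is justifying the identity $GF'=rG'F$ for real, non-integer $r$ in the formal setting. I would settle it by defining $F=a_0^r\exp\bigl(r\log(1+h)\bigr)$, where $\log(1+h)$ and $\exp$ are the standard formal series composed with a series without constant term. Formal derivatives satisfy the chain rule for such compositions, which gives $F'=rF\,h'/(1+h)$. This is the required identity, and this definition agrees with the binomial series used in the first step.
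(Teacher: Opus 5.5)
The paper does not prove this lemma; it quotes it from the cited sources, so there is no internal argument to compare against. Your proof is correct. It is the classical derivation of this recurrence: differentiate $F=G^r$, use $GF'=rG'F$, and compare coefficients of $z^{n-1}$. It also supplies what the citation leaves implicit, namely that the formal coefficients really are the asymptotic coefficients of $[g(x)]^r$, and that $GF'=rG'F$ holds in $\bR[[z]]$ for non-integer $r$.

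Three small points to tidy up.

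First, for $N=0$ you only have $w=R_0=o(1)$, not $O(x^{-1})$. The Taylor remainder $O(w)=o(1)$ still suffices there.

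Second, writing $[g(x)]^r=a_0^r(1+w)^r$ tacitly requires $a_0>0$ (or $r\in\mathbb{Z}$). This restriction is also implicit in the statement itself. It holds in every use in the paper, where $d_0^+=2$ and $\bd^-$ is raised only to integer powers.

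Third, the $\exp/\log$ detour can be avoided. Put $B_r(y)=\sum_j\binom{r}{j}y^j$. The identity $(j+1)\binom{r}{j+1}=(r-j)\binom{r}{j}$ gives $(1+y)B_r'(y)=rB_r(y)$ coefficientwise. Substituting $y=h$ and using the chain rule then yields $GF'=rG'F$ directly for $F=a_0^rB_r(h)$. Finally, since $na_0\neq0$, the recurrence determines $P_n$ uniquely, so your formal coefficients coincide with those defined in the lemma.
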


The quantity $P[n,r,\ba]$ can be viewed as the coefficient of $x^{-n}$ in the $r$-th power of the series associated with the sequence $\ba$.
 
We set
\[
d_k^+(\ba,\bb)=b_k+a_k,\qquad k\in\bN_0.
\]
If the two coefficient sequences are distinct, let $z=z(\ba,\bb)$ be the
smallest integer $n\in\bN_0$ such that $a_n\ne b_n$, and define
\[
d_k^-(\ba,\bb)=b_{z+k}-a_{z+k},\qquad k\in\bN_0.
\]
Remark \ref{rem-initialcoeffs} (\ref{rem-coeff0}) gives $z\ge1$. If the two
coefficient sequences coincide, we set $z(\ba,\bb)=\infty$; in that case the
sequence $\bd^-$ is not needed.

We next present a key theorem on the asymptotic expansion of a composition of two
means. It will play a crucial role in our study and was proved in \cite{BurEl-2017};
the statement below is adapted to the notation introduced above, and the case
$z(\ba,\bb)=\infty$ is stated explicitly.

\begin{theoremA*} 
	\label{thm-composition}
	Let the means $M$ and $N$ have asymptotic expansions \eqref{asymexp-M} and \eqref{asymexp-N},
	and let the mean $K$ have an asymptotic expansion of the same type with the sequence of coefficients $\bc=(c_n)_{n\in\bN_0}$.
		If $z(\ba,\bb)<\infty$, then the composition $K(M,N)$ has an asymptotic expansion of the same type with the coefficients
	\bes
		C_m[\bc,\ba,\bb] = \frac12 \sum_{n=0}^{\lfloor \frac{m}{z(\ba,\bb)} \rfloor} 
			c_n \sum_{k=0}^{m-n\mkern1mu z(\ba,\bb)} P[k,n,\bd^-(\ba,\bb)] P[m-k-n\mkern1mu z(\ba,\bb),-n+1,\bd^+(\ba,\bb)], 
			\  m\in\bN_0.
		\ees
		If $z(\ba,\bb)=\infty$, then
		\[
			C_m[\bc,\ba,\bb]=a_m=b_m,\qquad m\in\bN_0.
		\]
\end{theoremA*}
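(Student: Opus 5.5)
The plan is to work directly with the formal expansions in $\bR[[u]]$ after dividing by $x$. By homogeneity of $K$,
\[
K(M,N)(x-t,x+t)=K\bigl(M(x-t,x+t),N(x-t,x+t)\bigr)
\]
is evaluated at the pair $(X-T,X+T)$, where
\[
X=\tfrac12(M+N),\qquad T=\tfrac12(N-M).
\]
With $u=t/x$, the expansions \eqref{asymexp-M} and \eqref{asymexp-N} give
\[
X=\tfrac{x}{2}\sum_{k\ge0} d_k^+\,u^k ,\qquad
T=\tfrac{x}{2}\,u^{z}\sum_{k\ge0} d_k^-\,u^k ,
\]
because $b_n-a_n=0$ for $n<z$. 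Note that $d_0^+=2\neq0$, and $d_0^-\neq0$ by the definition of $z$.

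Next, use homogeneity of $K$ once more to write
\[
K(X-T,X+T)=X\,K\bigl(1-T/X,\,1+T/X\bigr).
\]
Substituting $\tau=T/X$ into the expansion of $K$ formally gives $X\sum_n c_n\tau^n=\sum_n c_n T^nX^{1-n}$. Now $T^n$ equals $(x/2)^n u^{nz}$ times the $n$-th power of the series $\bd^-$. By Lemma \ref{lemma-power}, with $x^{-1}$ replaced by $u$, its coefficients are $P[k,n,\bd^-]$. Likewise $X^{1-n}$ equals $(x/2)^{1-n}$ times the $(1-n)$-th power of the series $\bd^+$, whose coefficients are $P[j,-n+1,\bd^+]$; the lemma applies since $d_0^+\neq0$. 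The factors of $1/2$ combine to $\tfrac12$ overall, and the power of $x$ is $x\cdot u^{nz}$. Collecting the coefficient of $x\,u^m$ means summing over $k+j=m-nz$, which gives exactly $C_m$. Only $n\le\lfloor m/z\rfloor$ contribute, so every coefficient is a finite sum and the formal series is well defined.

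The main obstacle is justifying that this formal computation is a genuine asymptotic expansion, rather than merely a formal identity. The point is that $\tau=T/X=O(u^z)\to0$, so the expansion of $K(1-\tau,1+\tau)$ in powers of $\tau$ is a valid asymptotic expansion as $\tau\to0$. This holds because homogeneity turns the $x\to\infty$ expansion of $K(x-t,x+t)$ into a $\tau\to0$ expansion of $K(1-\tau,1+\tau)$. Truncating at order $N$ with remainder $o(\tau^N)$, and composing with the asymptotic series for $\tau$ and $X$, yields $o(u^{N})$ errors. One must also handle the case where $t$ and the sign of $\tau$ vary; since $\tau$ has fixed leading coefficient sign for fixed $t$, this is fine.

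Finally, the case $z=\infty$ means $a_n=b_n$ for all $n$. Then $N-M=o(x^{-n})$ for every $n$, and $K(M,N)$ lies between $M$ and $N$ by \eqref{def-mean-minmax}. Hence it shares their asymptotic expansion, giving $C_m=a_m=b_m$.
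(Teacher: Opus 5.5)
Your proof is correct. The paper does not prove Theorem A; it imports it from \cite{BurEl-2017}. Your argument is exactly the derivation the formula encodes: expand $K(X-T,X+T)=\sum_n c_nT^nX^{1-n}$ with $X=\frac12(M+N)$ and $T=\frac12(N-M)$. This produces the factor $\frac12$, the $n$-th power of $\bd^-$ shifted by $u^{nz}$, and the $(1-n)$-th power of $\bd^+$, so it is essentially the same approach.

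Your squeeze argument properly justifies the $z=\infty$ clause, which the paper only states. The caveat about the sign of $\tau$ is unnecessary, because the two-sided expansion of $K$ holds as $\tau\to0$ from either side. The sign matters only in the branchwise setting of Section~\ref{section-moregeneral}.
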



This theorem was already used successfully to obtain the Gaussian and Archimedean compounds of symmetric means in \cite{BurElMih-2024,BurMih-Gauss}, and to define and analyse a new class of means in \cite{Mih-2023-coinciding}.

\section{Nonsymmetric Gaussian compound} \label{section-gauss}

Given the known asymptotic expansions of the means $M$ and $N$, our goal is to find a formula for the coefficients $\c_n$ in the asymptotic expansion \eqref{asymexp-MogN} of their Gaussian compound. The starting point is the functional equation
 	\be\label{rel-MogN}
		M\og N  = M\og N \, (M,N).
	\ee

\begin{thm}\label{thm-main-Gauss}
Assume that the means $M$, $N$ and $M\og N$ have asymptotic expansions
\eqref{asymexp-M}, \eqref{asymexp-N} and \eqref{asymexp-MogN}, respectively.
Then the coefficients $\c_n$ can be calculated as follows. 

If $z(\ba,\bb)=1$ and $d_0^-(\ba,\bb) \neq \pm2$, then $\gamma_0=1$ and
	\be\label{thm-gausscoeff-z=1}
		\begin{aligned}
		\c_m &= \frac1{ 2- (d_0^-(\ba,\bb))^m2^{1-m}} 
		\sum_{n=0}^{m-1}  \c_n \times \\
		&\qquad \qquad \times
			\sum_{k=0}^{m-n} P[k,n,\bd^-(\ba,\bb)]P[m-k-n,-n+1,\bd^+(\ba,\bb)],\quad m\in \bN.
		\end{aligned}
	\ee

If $1<z(\ba,\bb)<\infty$, then $\gamma_0=1$ and
	\be\label{thm-gausscoeff-z>1}
		\c_m = \frac12 \sum_{n=0}^{\lfloor \frac{m}{z(\ba,\bb)} \rfloor}  \c_n
			\sum_{k=0}^{m-n\mkern1mu z(\ba,\bb)} P[k,n,\bd^-(\ba,\bb)]P[m-k-n\mkern1mu z(\ba,\bb),-n+1,\bd^+(\ba,\bb)],
			\quad m\in \bN.
	\ee
	
If
$z(\ba,\bb)=\infty$, then $\gamma_m=a_m=b_m$ for every $m\in\bN_0$.

\end{thm}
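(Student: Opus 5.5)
The plan is to apply Theorem A with $K=M\og N$ to the invariance equation \eqref{rel-MogN}, and then read off the coefficients of the resulting formal identity in $\bR[[u]]$, $u=t/x$. By \eqref{asymexp-MogN} and Theorem A, the right-hand side $M\og N\,(M,N)$ has coefficients $C_m[\boldsymbol{\gamma},\ba,\bb]$. Equating these with the left-hand side gives
\[
\c_m=C_m[\boldsymbol{\gamma},\ba,\bb],\qquad m\in\bN_0,
\]
and everything reduces to solving this triangular system for the $\c_m$. We have $\c_0=1$ by Remark \ref{rem-initialcoeffs}(\ref{rem-coeff0}). The case $z(\ba,\bb)=\infty$ is immediate, since Theorem A gives $C_m=a_m=b_m$.

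The next step is to locate where $\c_m$ itself appears in $C_m$. Its coefficient comes from the summand $n=m$, which requires $mz\le m$, i.e.\ $z=1$ (for $m\ge1$).

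If $1<z<\infty$, then $\lfloor m/z\rfloor<m$ for $m\ge1$. Hence $C_m$ involves only $\c_0,\dots,\c_{m-1}$, and the identity $\c_m=C_m$ is already the explicit recursion \eqref{thm-gausscoeff-z>1}.

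If $z=1$, the summand $n=m$ has inner sum only over $k=0$. It equals
\[
\tfrac12\,\c_m\,P[0,m,\bd^-]\,P[0,1-m,\bd^+]=\tfrac12\,\c_m\,(d_0^-)^m(d_0^+)^{1-m}.
\]
Here $d_0^+=a_0+b_0=2$ by Remark \ref{rem-initialcoeffs}, so this term is $\c_m (d_0^-)^m 2^{-m}$. Moving it to the left-hand side gives
\[
\c_m\bigl(1-(d_0^-)^m2^{-m}\bigr)=\tfrac12\sum_{n=0}^{m-1}\c_n\sum_{k=0}^{m-n}P[k,n,\bd^-]\,P[m-k-n,1-n,\bd^+].
\]
Multiplying by $2$ yields the denominator $2-(d_0^-)^m2^{1-m}$ of \eqref{thm-gausscoeff-z=1}.

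The main obstacle is justifying that this denominator never vanishes, and checking that the $m=0$ equation is consistent. For $m=0$ the equation reads $\c_0=\tfrac12\c_0 P[0,1,\bd^+]=\c_0$, which is trivially satisfied, so $\c_0=1$ comes from the mean property and not from the recursion. For $m\ge1$, the denominator vanishes iff $(d_0^-/2)^m=1$, i.e.\ $d_0^-=2$, or $d_0^-=-2$ with $m$ even. The hypothesis $d_0^-\neq\pm2$ excludes both. By Remark \ref{rem-initialcoeffs}(\ref{rem-coeff1int}) we have $|d_0^-|=|b_1-a_1|\le2$, so the excluded cases are exactly the endpoint configurations $a_1=\mp1$, $b_1=\pm1$. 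Once this is checked, induction on $m$ shows that the $\c_m$ are uniquely determined by the stated formulas.
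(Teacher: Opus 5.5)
Your proposal is correct and follows the paper's proof essentially step for step. You apply Theorem A to the invariance equation \eqref{rel-MogN} to get $\gamma_m=C_m[\pmb{\gamma},\ba,\bb]$, note that $\gamma_m$ appears on the right-hand side only through the summand $n=m$ when $z=1$, and isolate it using $d_0^+=2$ and $d_0^-\neq\pm2$. Your additional checks (the trivial $m=0$ equation, the exact vanishing condition $(d_0^-/2)^m=1$ for the denominator, and uniqueness by induction) are correct refinements that the paper leaves implicit.
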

	
\begin{proof}	
Assume that $z(\ba,\bb)<\infty$.
	According to Theorem A, the coefficients of $t^m x^{-m+1}$ in the asymptotic expansion of the right-hand side of \eqref{rel-MogN} are equal to $C_m [\pmb{\gamma}, \ba,\bb]$.
Comparison with the asymptotic expansion \eqref{asymexp-MogN} of the left-hand side of \eqref{rel-MogN} gives
	\be\label{coeff-gamma-gauss-general}
		\c_m  = \frac12 \sum_{n=0}^{\lfloor \frac{m}{z(\ba,\bb)} \rfloor}  \c_n
			\sum_{k=0}^{m-n\mkern1mu z(\ba,\bb)} P[k,n,\bd^-(\ba,\bb)]P[m-k-n\mkern1mu z(\ba,\bb),-n+1,\bd^+(\ba,\bb)]. 
	\ee

In case $z>1$, formula \eqref{coeff-gamma-gauss-general} already gives an explicit expression for the coefficient $\c_m$, namely \eqref{thm-gausscoeff-z>1}.

In case $z=1$, the coefficient $\c_m$ also appears on the right-hand side of
\eqref{coeff-gamma-gauss-general}:
	\begin{align*}
		\c_m &= \frac12 \sum_{n=0}^{m}  \c_n
			\sum_{k=0}^{m-n} P[k,n,\bd^-(\ba,\bb)]P[m-k-n,-n+1,\bd^+(\ba,\bb)] \\
			&= \frac12 \sum_{n=0}^{m-1}  \c_n
			\sum_{k=0}^{m-n} P[k,n,\bd^-(\ba,\bb)]P[m-k-n,-n+1,\bd^+(\ba,\bb)] \\
			& \qquad\qquad+ \frac12 (d_0^-(\ba,\bb))^m  (d_0^+(\ba,\bb))^{1-m} \c_m.
	\end{align*}
Since $d_0^+(\ba,\bb) =2$ and $d_0^-(\ba,\bb) \neq \pm 2$,
we obtain the expression \eqref{thm-gausscoeff-z=1}.
%
	If $z(\ba,\bb)=\infty$, Theorem A gives
	$C_m[\pmb{\gamma},\ba,\bb]=a_m=b_m$. Comparing both sides of
	\eqref{rel-MogN} proves the last assertion.
\end{proof}

\begin{rem}\label{rem-a1b1pm1}
	Suppose that $z(a,b)=1$ and
\[
d^-_0(a,b)=b_1-a_1=\pm2.
\]
Since $a_1,b_1\in[-1,1]$, the only possible pairs of first coefficients
are
\[
(a_1,b_1)\in\{(-1,1),(1,-1)\}.
\]
In this case the coefficient multiplying $\gamma_m$ in the formal
invariance equation vanishes, and formula \eqref{thm-gausscoeff-z>1} cannot be
used to determine $\gamma_m$ recursively.
The existence and
convergence of the corresponding functional iteration must be studied
separately. 

For example, the projection means belong to this case:
	their expansions are projection series $\Pi_1(x-t,x+t)=x-t$ and  $\Pi_2(x-t,x+t)=x+t$, and their Gaussian compound does not exist. Here the
	singularity of the formal recursion reflects a genuine non-existence of the
	Gaussian compound. 
	
The same pair of first coefficients, however, does not by
	itself preclude existence.
	Consider, for instance, means
	\[
		P(s,t)=t-\frac{(t-s)^3}{2(s+t)^2},\qquad
		Q(s,t)=s+\frac{(t-s)^3}{2(s+t)^2},
	\]
	whose two-sided expansions of type \eqref{asymexp-M} are
	\[
		P(x-t,x+t)=x+t-t^3x^{-2},\qquad
		Q(x-t,x+t)=x-t+t^3x^{-2},
	\]
	so that $(a_1,b_1)=(1,-1)$ and $d_0^-(\ba,\bb)=-2$, exactly the exceptional pair
	above, while neither mean is a projection. Since $P(s,t)+Q(s,t)=s+t$, the
	Gaussian iteration preserves the sum, and hence the compound exists and
	$ 		P\og Q=A.$
	
\end{rem}

In the case $z(\ba,\bb)=1$ and $d_0^-(\ba,\bb) \neq \pm2$, the first few coefficients are:
\begin{align*}
	\gamma_0 &=1,\\
	\gamma_1 &= \frac{d_1^+(\ba,\bb)}{2-d_0^+(\ba,\bb)} =\frac{a_1+b_1}{2+a_1-b_1},\\
	\gamma_2 &= \frac{2\left( d_1^-(\ba,\bb) \, d_1^+(\ba,\bb) 
		+d_2^+(\ba,\bb)(2-d_0^-(\ba,\bb))\right)}{(4-d_0^-(\ba,\bb)^2)(2-d_0^-(\ba,\bb))}
		= \frac{4\, (a_2-a_2 b_1+b_2+a_1 b_2)}{(2+b_1-a_1)(2+a_1-b_1)^2},
\end{align*}
and in the case $z(\ba,\bb)=2$ we have:
\begin{align*}
	\gamma_0 &=1,\\
	\gamma_1 &= \tfrac12 d_1^+(\ba,\bb) = \tfrac12(a_1+b_1) ,\\
	\gamma_2 &= \tfrac14 \left( d_0^-(\ba,\bb) \, d_1^+(\ba,\bb)+2\, d_2^+(\ba,\bb) \right)
		=\tfrac14 \left( -a_2 (-2 + a_1 + b_1) + (2 + a_1 + b_1) b_2 \right),\\
	\gamma_3 &= \tfrac14 \left( d_1^-(\ba,\bb) \, d_1^+(\ba,\bb)+2\, d_3^+(\ba,\bb) \right)
		= \tfrac14\left( -a_3 (-2 + a_1 + b_1) + (2 + a_1 + b_1) b_3 \right),\\
	\gamma_4 &= \tfrac{1}{16} ((d_0^-(\ba,\bb)^3+4 d_2^-(\ba,\bb)) d_1^+(\ba,\bb)+2 d_0^-(\ba,\bb)^2 d_2^+(\ba,\bb)+8 d_4^+(\ba,\bb)),
\end{align*}

\begin{rem} Since we use the same algorithm \eqref{thm-gausscoeff-z>1} for every finite $z>1$, the coefficients for $z>2$ need not be computed from separate formulas. Instead, one first evaluates them symbolically from the case $z=2$ and then rearranges the resulting expressions by means of a purely formal substitution of the symbols $d_k^-(\ba,\bb)$: for $k<z-2$ the symbol $d_k^-(\ba,\bb)$ is replaced by $0$, while for $k\ge z-2$ the symbol $d_k^-(\ba,\bb)$ is replaced by $d_{k-(z-2)}^-(\ba,\bb)$, i.e.\ by the coefficient of the given sequence indexed accordingly. This is only a relabelling of the terms in the $z=2$ expressions and does not equate $d_k^-(\ba,\bb)$ with zero, which would contradict its definition. For example, in the case $z=3$ the substitution replaces $d_0^-(\ba,\bb)$ by $0$ and shifts the remaining symbols, and the coefficients above give
\begin{align*}
	\gamma_0 &=1,\\
	\gamma_1 &= \tfrac12 d_1^+(\ba,\bb) = \tfrac12(a_1+b_1) ,\\
	\gamma_2 &=\tfrac12 d_2^+(\ba,\bb) = \tfrac12(a_2+b_2), \\
	\gamma_3 &= \tfrac14 \left( d_0^-(\ba,\bb) \, d_1^+(\ba,\bb)+2\, d_3^+(\ba,\bb) \right),\\
	\gamma_4 &= \tfrac{1}{4}\left(d_1^-(\ba,\bb)d_1^+(\ba,\bb)+2 d_4^+(\ba,\bb)\right).
\end{align*}
\end{rem}


\section{Nonsymmetric Archimedean compound}\label{section-arch}

	In this section, we obtain the general asymptotic expansion of the Archimedean compound of two means. Let $M$ and $N$ be means with asymptotic expansions \eqref{asymexp-M} and \eqref{asymexp-N}, and let their Archimedean compound have the asymptotic expansion \eqref{asymexp-MoaN}.

We find the coefficients $\a_n$ by solving the functional equation
\be\label{rel-MoaN} 
	M \oa N \,(s,t)=M\oa N\, (M(s,t),N^*(s,t))
\ee
in terms of asymptotic expansions, where $N^*(s,t)=N(M(s,t),t)$.
 Thus, the Archimedean iteration of $(M,N)$ is the Gaussian iteration
of the pair $(M,N^*)$. Depending on the first index at which the
formal series of $M$ and $N^*$ differ, all three cases of
Theorem~\ref{thm-main-Gauss} may occur.

 \begin{thm}\label{thm-main-Arch}
 Let $M$, $N$ and $M\oa N$ be means with asymptotic expansions \eqref{asymexp-M}, \eqref{asymexp-N} and \eqref{asymexp-MoaN}, respectively.
Define ${\bf e}=(1,1,0,0,\ldots)$ and $c_m=C_m[\bb,\ba,{\bf e}]$, $m\in\bN_0$,
so that $\bc=(c_m)_{m\in\bN_0}$ is the coefficient sequence of
$
N^*(s,t)=N(M(s,t),t).
$

If $z(\ba,\bc)=1$ and $(a_1,b_1)\neq(-1,1)$, then
$\alpha_0=1$ and 
	\be
	\begin{aligned}
		\a_m &= \frac{1}{2} \left[ 1-\frac{(1-a_1)^m (1+b_1)^m}{2^{2m}}\right]^{-1}
		 \sum_{n=0}^{m-1} \a_n \times \\
		 	&\qquad \qquad \times \sum_{k=0}^{m-n} 
		 	 P[k,n,\bd^-(\ba,\bc)] \, P[m-n-k,-n+1,\bd^+(\ba,\bc)],
		\quad m\in\bN.
	\end{aligned}
	\ee

If $1<z(\ba,\bc)<\infty$, then $\alpha_0=1$ and 
\[
\alpha_m=
\frac12
\sum_{n=0}^{\lfloor \frac{m}{z(\ba,\bc)} \rfloor}
\alpha_n
\sum_{k=0}^{m-n\mkern1mu z(\ba,\bc)}
P[k,n,\bd^-(\ba,\bc)]
P[m-k-n\mkern1mu z(\ba,\bc),-n+1,\bd^+(\ba,\bc)],
\quad m\in\bN.
\]

If $z(\ba,\bc)=\infty$, then
\[
\alpha_m=a_m=c_m,\quad m\in\bN_0.
\]

For $(a_1,b_1)=(-1,1)$, the branch with
$d^-_0(\ba,\bc)=2$ is singular and the formal invariance equation does
not determine the coefficients recursively.
  \end{thm}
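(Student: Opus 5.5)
The plan is to reduce the statement to Theorem~\ref{thm-main-Gauss} applied to the pair $(M,N^*)$, where $N^*(s,t)=N(M(s,t),t)$. This is the reduction already noted before the theorem.

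First I would justify that $\bc$ is the coefficient sequence of $N^*$. The second projection $\Pi_2(s,t)=t$ has expansion $\Pi_2(x-t,x+t)=x+t$, whose coefficient sequence is exactly ${\bf e}=(1,1,0,0,\ldots)$. Since $a_1\in[-1,1]$ and $a_1=1$ forces $a_2=0$ (Remark~\ref{rem-initialcoeffs}), we must check when $\ba$ and ${\bf e}$ coincide. If $\ba={\bf e}$, Theorem~A gives $c_m=a_m$, and the formula is still consistent. Otherwise $z(\ba,{\bf e})<\infty$, and Theorem~A applied to $K=N$, inner means $M$ and $\Pi_2$, yields $c_m=C_m[\bb,\ba,{\bf e}]$. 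Hence $N^*$ has an expansion of type \eqref{asymexp-N} with coefficients $\bc$, and $c_0=1$.

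Second, the functional equation \eqref{rel-MoaN} says that $M\oa N$ is invariant under $(M,N^*)$, exactly as \eqref{rel-MogN} is for the Gaussian compound. Theorem~A, applied with outer coefficients $\pmb\alpha$ and inner sequences $\ba,\bc$, therefore gives the same identity \eqref{coeff-gamma-gauss-general} with $\bb$ replaced by $\bc$ and $\pmb\gamma$ by $\pmb\alpha$. The cases $1<z(\ba,\bc)<\infty$ and $z(\ba,\bc)=\infty$ then follow verbatim from the proof of Theorem~\ref{thm-main-Gauss}.

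The case $z(\ba,\bc)=1$ is where the work lies. We need $d_0^-(\ba,\bc)=c_1-a_1$ explicitly. From the formula for $C_1$ (or directly), $c_1$ is the first-order coefficient of $N(M,\Pi_2)$. Write the arguments as $x+\tfrac{(a_1+1)t}{2}\mp\tfrac{(1-a_1)t}{2}$ to first order. The first-order term of $N$ is then
\[
c_1=\tfrac{1+a_1}{2}+b_1\tfrac{1-a_1}{2},
\]
so
\[
d_0^-(\ba,\bc)=c_1-a_1=\tfrac{(1-a_1)(1+b_1)}{2}.
\]
The isolated $\alpha_m$-term on the right has coefficient $\tfrac12 (d_0^-)^m 2^{1-m}$. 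This equals $\tfrac12\cdot 2\,(1-a_1)^m(1+b_1)^m 2^{-2m}$, which produces exactly the prefactor $\tfrac12[1-(1-a_1)^m(1+b_1)^m2^{-2m}]^{-1}$ in the theorem.

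The main obstacle is identifying the singular set. The factor vanishes for some $m$ iff $d_0^-=\pm2$, i.e. $(1-a_1)(1+b_1)=\pm4$. Since $a_1,b_1\in[-1,1]$, the product lies in $[0,4]$, so only $+4$ is possible, and it forces $a_1=-1$, $b_1=1$. In that case $d_0^-=2$, and the $m$-th equation reads $\alpha_m=\alpha_m+(\text{lower terms})$. So $\alpha_m$ is not determined, which gives the final singular assertion. Conversely, $(a_1,b_1)\ne(-1,1)$ guarantees $|d_0^-|<2$, so the bracket is nonzero for every $m\in\bN$ and the division is legitimate. Finally, $\alpha_0=1$ follows from $a_0=c_0=1$ via $C_0=\alpha_0$, or simply from Remark~\ref{rem-initialcoeffs}.
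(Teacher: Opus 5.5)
Your proposal is correct and follows essentially the same route as the paper: you reduce to Theorem~\ref{thm-main-Gauss} for the pair $(M,N^*)$, compute $d_0^-(\ba,\bc)=c_1-a_1=\tfrac{(1-a_1)(1+b_1)}{2}$, and read off both the prefactor and the unique singular pair $(a_1,b_1)=(-1,1)$ from $0\le(1-a_1)(1+b_1)\le4$. One minor point: the relation $C_0=\alpha_0$ only says $\alpha_0=\alpha_0$, so it does not force $\alpha_0=1$; that normalization comes from Remark~\ref{rem-initialcoeffs}, which your alternative justification correctly cites.
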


 \begin{proof}
	According to Theorem A, the composition $N^*$ has the asymptotic expansion
 	\begin{align*}
		N^*  (x-t,x+t) &= N(M(x-t,x+t),x+t)  
		   \sim \sum_{m=0}^\infty  C_m [\bb,\ba,\mathbf{e}] \, t^m x^{-m+1}
		   =  \sum_{m=0}^\infty  c_m \, t^m x^{-m+1}.
	\end{align*} 

The Archimedean iteration of $(M,N)$ is precisely the Gaussian
iteration of the pair $(M,N^*)$, since the invariance equation
for $M\otimes_a N$ is
\[
M\otimes_a N
=
M\otimes_a N \,(M,N^*).
\]
The coefficient sequence $(\alpha_n)_{n\in\bN_0}$ is therefore obtained by applying
Theorem~\ref{thm-main-Gauss} to the sequences $\ba$ and $\bc$.

It remains only to identify the first coefficient of their difference.
A direct computation gives
\[
c_1=
\frac{1+a_1+(1-a_1)b_1}{2},
\]
and hence
\[
c_1-a_1=
\frac{(1-a_1)(1+b_1)}{2}.
\]

If $(1-a_1)(1+b_1)>0$, then $z(\ba,\bc)=1$.
The value $c_1-a_1=2$ occurs only when
$ (a_1,b_1)=(-1,1)$,
which is the singular case. Otherwise, formula \eqref{thm-gausscoeff-z=1}
applied to $(\ba,\bc)$ gives the stated recursion.

If
 $ (1-a_1)(1+b_1)=0$,
that is, if $a_1=1$ or $b_1=-1$, then $c_1=a_1$.
In this situation $z(\ba,\bc)$ may be any integer greater than one or may
be infinite. The corresponding case of Theorem~\ref{thm-main-Gauss} must therefore be
applied.

This proves all the assertions.
	 \end{proof}

If $a_1\neq1$, $b_1\neq-1$ and $(a_1,b_1)\neq(-1,1)$, the first few coefficients are:
\begin{align*}
	\alpha_0 &= 1,\\
	\alpha_1 &= \frac{2 d_1^+(\ba,\bc)}{3+a_1+(a_1-1)b_1} 
		=\frac{1 + 3 a_1 + (1-a_1) b_1}{3+a_1+(a_1-1)b_1},\\
	\alpha_2 &= \frac{2 d_1^-(\ba,\bc) d_1^+(\ba,\bc) + 
 d_2^+(\ba,\bc)(3 + a_1 + (-1 + a_1) b_1)}%
	{ (2 - \frac{1}{8} (-1 + a_1)^2 (1 + b_1)^2)(3 + a_1 + (-1 + a_1) b_1)}\\
	&= \frac{32 a_2 (-1 + b_1) - 8 (-1 + a_1)^2 (1 + a_1) b_2}%
		{(-5 + a_1 + (-1 + a_1) b_1) (3 + a_1 + (-1 + a_1) b_1)^2}.
\end{align*}

\section{Formal coefficients, metric contraction, and existence}\label{section-formal}

The main results in Sections \ref{section-gauss} and \ref{section-arch} solve invariance equations in the formal
algebra fixed in the Introduction. There are, however, three different
questions: whether the iterative procedure converges, whether its limit has
an expansion in the scale $x^{1-n}$, and whether the formal invariance
equation determines the coefficients of that expansion. Our recursions answer
the third question under the assumptions made in the first two. In this
section we give some additional conditions which guarantee convergence and
show how they are reflected in the formal coefficients.

Classical existence results for compound means go back to Foster and Phillips
\cite{Foster} and were later refined by Costin and Toader
\cite{CostinToader}; see also the overview \cite{ToaderCostin}. 

We use the metric introduced by Farhi \cite{Farhi} for symmetric strict means and later studied, in a related homogeneous setting, by Witkowski \cite{Witk-2015}. The same formula defines a useful distance for the arbitrary bivariate means considered here.

 For two bivariate means $M$ and $N$, let
\begin{equation}\label{metric-d}
	d(M,N)=\sup_{s\neq w}\frac{|M(s,w)-N(s,w)|}{|w-s|}.
\end{equation}
Since both values lie between $s$ and $w$, one always has $0\leqslant d(M,N)\leqslant1$.

For homogeneous means this supremum can be written in the variables used throughout
the paper. Every pair $s\neq w$ of positive numbers can be written as
$s=x(1-u)$, $w=x(1+u)$ with
\[
	x=\frac{s+w}{2},\qquad u=\frac{w-s}{w+s}=\frac{t}{x},\qquad 0<|u|<1,
\]
where $t=\frac{w-s}{2}$ is the half-gap appearing in \eqref{asymexp-M}. Since
$|w-s|=2x|u|$ and, by homogeneity, $M(s,w)=x\,M(1-u,1+u)$, the quotient in
\eqref{metric-d} depends on $u$ alone, and
\begin{equation}\label{metric-normalized}
	d(M,N)=\sup_{0<|u|<1}\frac{|M(1-u,1+u)-N(1-u,1+u)|}{2|u|}.
\end{equation}
This normalization also makes the relation with our formal series direct. Indeed,
dividing \eqref{asymexp-M} by $x$ turns it into
\[
	M(1-u,1+u)\sim\sum_{n=0}^{\infty}a_nu^n=1+a_1u+\mathcal{O}(u^2),
	\qquad u\to0,
\]
and likewise $N(1-u,1+u)=1+b_1u+\mathcal{O}(u^2)$, so that the coefficients
appearing here are precisely those of \eqref{asymexp-M} and \eqref{asymexp-N}.
Letting $u\to0$ in \eqref{metric-normalized} gives
\begin{equation}\label{metric-lower-gauss}
d(M,N)\ge q_g:=\frac{|a_1-b_1|}{2}.
\end{equation}
No better bound can be stated in terms of $a_1$ and $b_1$ alone.
In fact, for the
weighted arithmetic means
$A_\lambda(s,u)=\lambda s+(1-\lambda)u$, one has
\[
d(A_\lambda,A_\mu)=|\lambda-\mu|
=\frac{|a_1-b_1|}{2}.
\]

Farhi proved that $d(M,N)<1$ guarantees the existence and uniqueness of the
invariant mean and the convergence of the Gaussian iteration
\cite[Theorem 4.1]{Farhi}, with the error estimate
\be \label{metric-error-gauss}
	|M_n-L|\leqslant k^n|s-u|,\qquad |N_n-L|\leqslant k^n|s-u|,\qquad k=d(M,N).
\ee
Farhi worked with symmetric strict means, but under our wider convention, in which a mean is
only required to satisfy \eqref{def-mean-minmax}, the same nested interval argument gives the same conclusion. We
use this known contraction criterion only as background for the coefficient estimates below.

The first coefficients give a local version of this result even when the
global distance is not known. For the Gaussian iteration, $q_g$ in
\eqref{metric-lower-gauss} is the limiting ratio of the output gap to the
input gap near the diagonal. Hence $q_g<1$ makes the diagonal locally
attracting. More precisely, if $z=z(\ba,\bb)\ge2$ is finite and an iteration
converges to $L>0$, then, with $\Delta_n=|M_n-N_n|$,
\begin{equation}\label{local-order-gauss}
\Delta_{n+1}\sim
\frac{|a_z-b_z|}{2^zL^{z-1}}\Delta_n^z.
\end{equation}
Thus $z$ is also the local order of convergence. If $z=1$, the asymptotic
linear factor is $q_g$. If $z=\infty$, the two formal series agree to every
algebraic order, so the actual rate depends on terms which are invisible in
the fixed formal scale.

The Archimedean procedure can be treated by the same metric after one simple
observation. Put
\bes
N^*(s,u)=N(M(s,u),u).
\ees
Then the Archimedean iteration of $(M,N)$ is exactly the Gaussian iteration
of $(M,N^*)$. Consequently,
\bes
d(M,N^*)<1
\ees
is a sufficient condition for the existence and uniqueness of $M\oa N$, with
the error estimate \eqref{metric-error-gauss} in which $k=d(M,N^*)$.
The first coefficient $c_1$ of $N^*$ is
\[
c_1=\frac{1+a_1+(1-a_1)b_1}{2},
\qquad
c_1-a_1=\frac{(1-a_1)(1+b_1)}{2}.
\]
It follows that
\bes
d(M,N^*)\ge q_a:=\frac{(1-a_1)(1+b_1)}{4}.
\ees
Since $a_1,b_1\in[-1,1]$, we have $0\le q_a\le1$, and $q_a=1$ only for
$(a_1,b_1)=(-1,1)$, exactly the singular case of the formal Archimedean
recursion. The bound is again sharp: if $M=A_\lambda$ and $N=A_\mu$, then
$N^*=A_{\lambda\mu}$ and
\[
d(M,N^*)=\lambda(1-\mu)=q_a.
\]
In particular, $q_a<1$ gives local convergence near the diagonal. Higher
local orders are obtained from the first coefficient at which the formal
series of $M$ and $N^*$ differ, just as in
\eqref{local-order-gauss}.


We can now state precisely what is detected by the exceptional formal
coefficients. In the Gaussian case, $|a_1-b_1|=2$ forces $d(M,N)=1$ and
removes the linear contraction. In the Archimedean case,
$(a_1,b_1)=(-1,1)$ similarly gives $q_a=1$. At these boundary points,
higher-order or formally flat terms decide the functional convergence. The
formal recursion records the loss of recursive coefficient determination,
while the metric and the direct iteration answer the separate existence
question.


\section{Compounds of the weighted power means}\label{section-weightedPower}

In this section, we give some examples of compounds of nonsymmetric means. In particular, we apply our theorems to weighted power means. Recall that the weighted bivariate power mean with weights $(\lambda,1-\lambda)$, where $\lambda\in(0,1)$, is defined by
\[
M_{p,\lambda}(s,t)=[\lambda s^p + (1-\lambda)t^p]^{\frac1p},\quad p\neq0,
\]
where the limit case $p\to 0$ corresponds to the geometric mean
\[
G_{\lambda}(s,t)= s^\lambda\,t^{1-\lambda}.
\]
In \cite{ElVu-2016-01}, the authors derived an asymptotic expansion of the general $n$-variate power mean. Applying their Theorem 2.1 to our case gives the following result.

\begin{cor}
The weighted bivariate power mean has the following asymptotic expansion as $x\to\infty$:
\[M_{p,\lambda}(x-t,x+t)\sim\sum_{k=0}^{\infty} c_k(p,\lambda) t^k x^{-k+1},\]
where $c_0=1$ and
\[
c_k(p,\lambda)=\frac1k  \sum_{j=1}^k\left(j(1+\tfrac1p)-k\right)\binom{p}{j}\left(1+\lambda\left((-1)^j-1\right)\right)c_{k-j}(p,\lambda).
\]

\end{cor}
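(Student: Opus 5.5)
The plan is to reduce the statement to Lemma~\ref{lemma-power} applied once, with $r=1/p$. By homogeneity,
\[
M_{p,\lambda}(x-t,x+t)=x\bigl[\lambda(1-u)^p+(1-\lambda)(1+u)^p\bigr]^{1/p},\qquad u=t/x,
\]
so everything comes down to expanding the bracket as a power series in $u$ and then raising it to the power $1/p$. For $|u|<1$ the binomial series gives
\[
\lambda(1-u)^p+(1-\lambda)(1+u)^p=\sum_{j=0}^\infty \binom{p}{j}\bigl(\lambda(-1)^j+1-\lambda\bigr)u^j .
\]
Since $\lambda(-1)^j+1-\lambda=1+\lambda((-1)^j-1)$, the coefficient sequence of the bracket is
\[
\ba=(a_j)_{j\ge0},\qquad a_j=\binom{p}{j}\bigl(1+\lambda((-1)^j-1)\bigr),\qquad a_0=1\neq0 .
\]
The series converges for $|u|<1$, so it is in particular an asymptotic expansion in the variable $x^{-1}$ with the factor $t^j$ carried along, and $t$ is held fixed.

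Next I would apply Lemma~\ref{lemma-power} with $r=1/p$ and $a_0=1$. This gives $[g]^{1/p}\sim\sum_k P[k,1/p,\ba]\,t^k x^{-k}$ with $P[0]=1$ and
\[
P[k,1/p,\ba]=\frac1k\sum_{j=1}^k\bigl(j(1+\tfrac1p)-k\bigr)a_jP[k-j,1/p,\ba].
\]
Setting $c_k=P[k,1/p,\ba]$, this is exactly the stated recursion, and multiplying by $x$ gives the expansion in the form \eqref{asymexp-M}. The only point needing care is the bookkeeping of powers of $t$: I would apply the lemma in the variable $u$ (equivalently $x$ with $t$ fixed), so that the $k$-th coefficient is homogeneous of degree $k$ in $t$. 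Dividing out $t^k$ then leaves the scalar $c_k(p,\lambda)$.

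The case $p\to0$ (the geometric mean $G_\lambda$) is not covered by this formula, since $1/p$ is undefined. The statement implicitly assumes $p\neq0$, so I would just remark that $G_\lambda$ is handled separately via the logarithm.

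I do not expect a real obstacle. The one thing to check is that Lemma~\ref{lemma-power} applies to a convergent series in $u$ rewritten in $x$, which is immediate, and that the factor $1+\lambda((-1)^j-1)$ correctly encodes the weights: it equals $1$ for even $j$ and $1-2\lambda$ for odd $j$. Alternatively, one could simply cite \cite[Theorem 2.1]{ElVu-2016-01} with $n=2$, weights $(\lambda,1-\lambda)$ and points $x\mp t$, but the direct derivation above is self-contained.
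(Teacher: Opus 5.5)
Your proof is correct. It differs from the paper only in being self-contained rather than cited. The paper gives no independent argument: it specializes \cite[Theorem 2.1]{ElVu-2016-01} on the $n$-variate power mean to the two points $x\mp t$ with weights $(\lambda,1-\lambda)$. You instead rederive that specialization from the paper's own Lemma~\ref{lemma-power}. The binomial expansion of $\lambda(1-u)^p+(1-\lambda)(1+u)^p$ gives the coefficients $\binom{p}{j}\bigl(1+\lambda((-1)^j-1)\bigr)$ with constant term $1$. The power recursion with $r=1/p$ and $a_0=1$ is then literally the stated formula for $c_k$. One checks that it reproduces the listed values $c_1=1-2\lambda$ and $c_2=2\lambda(1-\lambda)(p-1)$.

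Your route makes transparent where the weight factor comes from and uses only the preliminaries. The citation is shorter and carries the $n$-variate generality, and is presumably proved by the same mechanism.

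Your treatment of the powers of $t$ is fine. The bracket is a convergent power series in $u=t/x$ with constant term $1$, so the lemma is really being used as the formal power-series recursion in $\mathbb{R}[[u]]$, where the sign of $t$ plays no role. Equivalently, working in $x$ with $t$ fixed, the homogeneity of the recursion produces the factor $t^k$. Excluding $p=0$ also agrees with the paper, which obtains the geometric-mean coefficients separately by letting $p\to0$.
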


The first few coefficients in the expansion of the power mean are:
\bes
\begin{split}
	c_0 &= 1,\\
	c_1 &= 1-2\lambda,\\
	c_2 &= -2\lambda(\lambda-1)(p-1),\\
	c_3&=-\tfrac23 \lambda(\lambda-1)(2\lambda-1)(p-1)(2p-1).
\end{split}
\ees
Note that the coefficients for the weighted geometric mean follow directly from $c_k(p,\lambda)$ by letting $p\to 0$; see Lemma 2.2 in \cite{ElVu-2014-04}.

Our algorithms give the first few coefficients for the Gaussian and Archimedean compounds of $M_{p,\lambda}$ and $M_{r,\mu}$. Their expansions are as follows:
\bes
\begin{split}
M_{p,\lambda}\og& M_{r,\mu} \,(x-t,x+t)=x+\frac{\lambda + \mu-1}{\lambda - \mu-1}t+\\
&+\frac{2\mu (1 - \lambda) ( \lambda(p-1)+\mu(1-r) + r -1)}{(1 + \lambda - \mu) (1 - \lambda +
     \mu)^2}t^2x^{-1}+\dots
\end{split}
\ees

\bes
\begin{split}
M_{p,\lambda}\oa& M_{r,\mu} \,(x-t,x+t)=x+\frac{1-\lambda(1+ \mu)}{1+\lambda(-1 +\mu)}t+\\
&+\frac{2\lambda\mu (1 - \lambda) ( 1-p+\lambda(\mu-1)(r-1))}{(-1 + \lambda(-1+\mu)) (1 +\lambda(-1+
     \mu))^2}t^2x^{-1}+\dots
\end{split}
\ees

For example, if $p=1$ and $r=0$, we obtain the compounds of the weighted arithmetic and weighted geometric means:

\[\begin{split}
A_{\lambda}\og& G_{\mu} \,(x-t,x+t)=x+\frac{\lambda + \mu-1}{\lambda - \mu-1}t+\\
&+\frac{2(1 - \lambda)\mu ( \mu-1)}{(1 + \lambda - \mu) (1 - \lambda +
     \mu)^2}t^2x^{-1}+\dots
\end{split}
\]

\[\begin{split}
A_{\lambda}\oa& G_{\mu} \,(x-t,x+t)=x+\frac{1-\lambda(1+ \mu)}{1+\lambda(-1 +\mu)}t+\\
&+\frac{2\lambda^2 ( \lambda-1) \mu(\mu-1)}{(-1 + \lambda(-1+\mu)) (1 +\lambda(-1+
     \mu))^2}t^2x^{-1}+\dots
\end{split}
\]

Also note that for $\lambda=\mu=\frac12$, we obtain the known asymptotic expansion of the arithmetic-geometric mean, derived in \cite{BurEl-2015,BurMih-Gauss} for the Gaussian compound and in \cite{BurElMih-2024} for the Archimedean compound.
\[ 
A \og G \, (x-t,x+t) = x-\frac14 t^2x^{-1} - \frac{5}{64}t^4x^{-3} - \frac{11}{256}t^6x^{-5} - \dots
\]
\[ 
A \oa G \,(x-t,x+t) = x+\frac13t-\frac4{45}t^2x^{-1} + \frac{44}{945}t^3x^{-2}- \frac{428}{14175}t^4x^{-3} - \dots
\]

Using asymptotic expansions of compound means, we can test whether a
Gaussian or Archi\-medean compound can belong to a proposed class of means. A
mismatching coefficient disproves such membership, whereas matching
coefficients may suggest a candidate that must then be verified by an
independent functional argument. The following known identities provide a
consistency check for our coefficient recursions; see \cite{ToaderCostin}.

\begin{cor}\label{power-cor} For every $p\in\mathbb{R}$ and $\lambda,\mu\in(0,1)$, the following identities hold:
\[M_{p,\lambda}\og M_{p,\mu} \,(s,t)= M_{p,\frac{\mu}{1-\lambda+\mu}}(s,t)\]
\[
M_{p,\lambda}\oa  M_{p,\mu} \,(s,t)=M_{p,\frac{\lambda\mu}{1-\lambda+\lambda\mu}}(s,t)\]
\[
M_{p,\lambda}\og M_{-p,1-\lambda} \,(s,t) = M_{0,\frac12}(s,t)=G(s,t)
\]

\end{cor}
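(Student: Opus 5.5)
The plan is to prove all three identities by the invariance principle, not by the formal coefficient recursions. If the Gaussian iteration of $(M,N)$ converges and a continuous mean $K$ satisfies $K(M(s,t),N(s,t))=K(s,t)$, then $K(M_n,N_n)=K(s,t)$ for every $n$. Letting $n\to\infty$ gives $K(L,L)=L$, so $M\og N=K$. The proof therefore needs two ingredients: existence of the compound, and an explicit invariant mean $K$.

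\emph{Existence.} For $\lambda\in(0,1)$ every $M_{p,\lambda}$ (including $p=0$) is continuous and strict, meaning $\min(s,t)<M(s,t)<\max(s,t)$ when $s\ne t$. I will use the standard nested interval argument. The intervals $I_n=[\min(M_n,N_n),\max(M_n,N_n)]$ are nested, so their endpoints converge to some $a\le b$. By continuity and compactness, a limit pair $(a,b)$ (in some order) is reproduced by the iteration with the same min and max. Strictness makes this impossible unless $a=b$. The Archimedean iteration is the Gaussian iteration of $(M,N^*)$, so the same argument applies to it once we know that $N^*$ is again a strict continuous mean. This is obvious from the computation below.

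\emph{First identity.} For $p\ne0$, substitute $X_n=M_n^p$ and $Y_n=N_n^p$. The iteration becomes the linear map $X_{n+1}=\lambda X_n+(1-\lambda)Y_n$, $Y_{n+1}=\mu X_n+(1-\mu)Y_n$. A linear functional $\kappa X+(1-\kappa)Y$ is invariant exactly when $\kappa\lambda+(1-\kappa)\mu=\kappa$, that is, $\kappa=\mu/(1-\lambda+\mu)$. Undoing the substitution, $K=M_{p,\kappa}$ is invariant. Along the way, $X_{n+1}-Y_{n+1}=(\lambda-\mu)(X_n-Y_n)$ with $|\lambda-\mu|<1$, which gives convergence directly. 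For $p=0$ the same computation with $X=\log M_n$ and $Y=\log N_n$ gives $G_\kappa$.

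\emph{Second identity.} In the same variables, $N^*$ gives $Y_{n+1}=\mu X_{n+1}+(1-\mu)Y_n=\lambda\mu X_n+(1-\lambda\mu)Y_n$. Hence $N^*=M_{p,\lambda\mu}$. Applying the first identity with $\mu$ replaced by $\lambda\mu$ yields the weight $\lambda\mu/(1-\lambda+\lambda\mu)$.

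\emph{Third identity.} The key observation is
\[
M_{-p,1-\lambda}(s,t)=\bigl((1-\lambda)s^{-p}+\lambda t^{-p}\bigr)^{-1/p}=\frac{st}{(\lambda s^p+(1-\lambda)t^p)^{1/p}}=\frac{st}{M_{p,\lambda}(s,t)}.
\]
Consequently $M_{n+1}N_{n+1}=M_nN_n$, and $G(M,N)=\sqrt{MN}=G$ is invariant. Existence comes from the strict-mean argument above. The case $p=0$ is the trivial statement $G_\lambda\og G_{1-\lambda}=G$, which follows in logarithms from the first identity with $\kappa=1/2$.

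The main obstacle I expect is making the existence step rigorous uniformly in $p$, including $p<0$ and $p=0$; the identification of the limit is purely algebraic. As a cross-check I would also verify that the coefficients $\gamma_1,\gamma_2$ and $\alpha_1,\alpha_2$ computed above agree with $c_1,c_2$ of the claimed limits. For example, with $p=r$ the Gaussian $\gamma_1=\frac{\lambda+\mu-1}{\lambda-\mu-1}$ should equal $1-2\kappa$.
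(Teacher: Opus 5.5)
Your proof is correct, and it takes a genuinely different route from the paper. The paper does not prove these identities at all. It quotes them as known results from \cite{ToaderCostin} and uses them only as a consistency check on the recursions of Theorems \ref{thm-main-Gauss} and \ref{thm-main-Arch}. It also says explicitly that matching coefficients cannot establish equality of means without an additional uniqueness or invariance argument.

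Your proof supplies exactly that argument, in three parts:
\begin{itemize}
\item The substitution $X=M^p$ (or $X=\log M$ for $p=0$) linearizes the Gaussian iteration. This gives both the invariant weight $\kappa=\mu/(1-\lambda+\mu)\in(0,1)$ and convergence, since $X_{n+1}-Y_{n+1}=(\lambda-\mu)(X_n-Y_n)$.
\item The identity $N^*=M_{p,\lambda\mu}$, which also holds for $p=0$, reduces the Archimedean case to the Gaussian one.
\item The reciprocity $M_{-p,1-\lambda}=st/M_{p,\lambda}$ makes $G$ invariant in the third case.
\end{itemize}

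The existence step you flag as the main obstacle is in fact harmless. For the first two identities, the linear contraction already proves convergence for every real $p$. For the third, your nested-interval argument needs only continuity and strictness of $M_{p,\lambda}$ and $M_{-p,1-\lambda}$, which hold for all real $p$ when $\lambda\in(0,1)$. Concretely, if the limits of the endpoints satisfied $a<b$, a subsequence with $(M_n,N_n)\to(a,b)$ or $(b,a)$ would force $\min(M(a,b),N(a,b))=a$, which contradicts strictness.

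The two approaches buy different things. Yours gives a self-contained functional proof of the identities. The paper's use of them instead tests the formal algorithm, which your proof does not address beyond the cross-check you propose. That cross-check does work: $1-2\kappa=(\lambda+\mu-1)/(\lambda-\mu-1)=\gamma_1$, and similarly $1-2\lambda\mu/(1-\lambda+\lambda\mu)$ equals the displayed $\alpha_1$.
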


Thus coefficient comparison in this example verifies consistency with exact
identities. More generally, it is a useful constructive tool for finding
candidates and for ruling them out, but it does not establish equality of
functions without an additional uniqueness or invariance argument.

\bigskip

\section{Branchwise expansions and neo-Pythagorean means}\label{section-moregeneral}

The neo-Pythagorean means were defined in \cite[p.\ 401]{Bull-2003}; some authors simply call them Greek means (see \cite{CostinToader}). The first four are the classical arithmetic ($A$), geometric ($G$), harmonic ($H$), and contraharmonic ($cH$) means. The others, for $0<a<b$, are defined by
\begin{align*}
	B_5(a,b)&=\frac12 \left[ b-a+ \sqrt{(b-a)^2+4a^2} \right], 
	&B_6(a,b)&=\frac12 \left[ a-b+ \sqrt{(b-a)^2+4b^2} \right],\\
	B_7(a,b)&= a+\frac{(b-a)^2}{b},
	&B_8(a,b)&=b-\frac{(b-a)^2}{b},\\
	B_9(a,b)&=\frac{b^2}{2b-a},
	&B_{10}(a,b)&=\frac12 \left[a+ \sqrt{a^2+4a(b-a)} \right].
\end{align*}
The definitions are completed by symmetry, $B_i(a,b)= B_i(b,a)$, together with $B_i(a,a)=a$.

Their asymptotic expansions can be easily obtained. For example, for $t>0$ and $x$ sufficiently large, we have:
\begin{align*}
	B_5(x-t,x+t)&=t+ \sqrt{t^2+(x-t)^2} 
		=t+ x \sqrt{1-2\tfrac{t}{x}(1-\tfrac{t}{x})}\\
		&=x+\sum_{n=2}^\infty (-1)^n \left[\sum_{k=0}^n\binom{\frac12}{k}\binom{k}{n-k} 2^k\right] t^n x^{-n+1},\\
	B_7(x-t,x+t)&= x-t+\frac{4t^2}{x+t} 
		 = x-t+4t^2x^{-2}\sum_{n=0}^\infty (-1)^n t^n x^{-n+1},\\
\end{align*}
The other expansions are omitted because they are not needed below and are
obtained by the same calculation.
For $t<0$, we have the same expansions as above with $-t$ in place of $t$. Hence, the general expansions, valid for all $t$, can be written as
	\bes
	 	B_5(x-t,x+t) = x+ \frac12t^2 x^{-1} +\frac12 \lvert t \rvert^3 x^{-2} 
	 		+  \frac38  t^4 x^{-3} +\frac18 \lvert t \rvert^5 x^{-4} 
	 		+\ldots
\ees
\bes	
	 	B_7(x-t,x+t) = x- \lvert t \rvert +4t^2 x^{-1} -4 \lvert t \rvert^3 x^{-2} 
	 		+  4  t^4 x^{-3} - 4 \lvert t \rvert^5 x^{-4} 
	 		+\ldots
	\ees

Note that the asymptotic expansions of these means cannot be written in the form \eqref{asymexp-M}. This motivates the slightly more general type of expansion introduced below.

Consider now the \textbf{branchwise expansion} of the following type:
\be \label{asymexp-M-abs-t}
	M(x-t,x+t) \sim \sum_{n=0}^{\infty} a_n(t) \, x^{-n+1}, 
\quad \text{ where } 
a_n(t)= 
	\begin{cases} \overline{a}_n t^n, & t>0,\\
					\underline{a}_n t^n, & t<0.
	\end{cases}
\ee 
When $\overline{a}_n=\underline{a}_n=a_n$ for each $n\in\bN_0$ in \eqref{asymexp-M-abs-t}, we obtain expansion \eqref{asymexp-M}. 

Thus \eqref{asymexp-M-abs-t} gives two formal coefficient series,
$\sum_{n\ge0}\overline a_nu^n$ for $t>0$ and
$\sum_{n\ge0}\underline a_nu^n$ for $t<0$. The branchwise calculations
below are again made in $\bR[[u]]$, separately for the two signs of $t$.

\begin{rem}	 
	 For each branch of \eqref{asymexp-M-abs-t}, the analogues of parts \eqref{rem-coeff0} and \eqref{rem-coeff1int} of Remark~\ref{rem-initialcoeffs} hold. In particular, the constant coefficient is $1$
and the first coefficient belongs to $[-1,1]$.  

An endpoint first coefficient $1$ or $-1$ does not force the higher coefficients of that branch to vanish. In fact, as explained in
Remark \ref{rem-initialcoeffs}, this conclusion is not valid even for a two-sided
nonsymmetric expansion. Therefore, whenever a branch has an endpoint
first coefficient together with nonzero higher coefficients, its full
formal series must be retained. The branch series of $B_7$ and $B_8$ provide natural examples of this
case.
	 
\end{rem}

We next explain how Theorems \ref{thm-main-Gauss} and \ref{thm-main-Arch} can be used for this type of expansion. Denote $\overline{\ba} = (\overline{a}_n)_{n\in\bN_0}$,
$\underline{\ba} = (\underline{a}_n)_{n\in\bN_0}$, 
and similarly for other sequences.

\begin{thm}\label{thm-g-pm}
Assume that the means $M$, $N$, and $M \og N$ have asymptotic expansions of type \eqref{asymexp-M-abs-t} with sequences of coefficients $(a_n(t))_{n\in\bN_0}$, $(b_n(t))_{n\in\bN_0}$, and $(\gamma_n(t))_{n\in\bN_0}$, respectively, and assume that $M<N$. Then, the coefficients $\overline{\gamma}_n$ are obtained by applying Theorem \ref{thm-main-Gauss} to the sequences $\ba=\overline{\ba}$ and $\bb=\overline{\bb}$, while the coefficients $\underline{\gamma}_n$ are obtained from the same theorem by taking $\gamma_n=\overline{\gamma}_n$, $\ba=\underline{\ba}$, and $\bb=\underline{\bb}$. If $N<M$, the coefficients $\gamma_n$ can be obtained analogously.
\end{thm}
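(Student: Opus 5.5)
The proof reduces the functional equation \eqref{rel-MogN} to Theorem A separately on each sign branch of $t$. The key observation is that the hypothesis $M<N$ fixes the sign of the new half-gap after one step of the iteration, whatever the sign of $t$. Write
\[
x_1=\tfrac12\bigl(M(x-t,x+t)+N(x-t,x+t)\bigr),\qquad t_1=\tfrac12\bigl(N(x-t,x+t)-M(x-t,x+t)\bigr),
\]
so that $M\og N\,(M,N)(x-t,x+t)=M\og N\,(x_1-t_1,x_1+t_1)$. Since $M<N$, we have $t_1>0$ for every $t\neq0$. Hence on the right-hand side of \eqref{rel-MogN} only the upper branch $\overline{\gamma}_n t_1^n$ of the compound's expansion is ever used, for both signs of $t$.

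First I would check that the proof of Theorem A is branchwise. That proof substitutes the expansions of $x_1$ and $t_1$ in powers of $t/x$ into the expansion of the outer mean $K$. The substitution is legitimate as soon as $t_1$ keeps a fixed sign, so that one branch series of $K$ applies. Concretely:
\begin{itemize}
\item For $t>0$, the inner means use the coefficients $\overline{\ba},\overline{\bb}$, and $\bd^\pm$ and $z$ are formed from them. Then $t_1=\tfrac12\sum_k d_k^-(\overline{\ba},\overline{\bb})\,t^{k+z}x^{-k-z+1}$ and $x_1$ has coefficients $\tfrac12 d_k^+(\overline{\ba},\overline{\bb})$.
\item For $t<0$, the same holds with $\underline{\ba},\underline{\bb}$.
\end{itemize}
In both cases the outer series is $\sum\overline{\gamma}_n t_1^n x_1^{1-n}$. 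Lemma \ref{lemma-power} then gives the powers $t_1^n$ and $x_1^{1-n}$ exactly as in Theorem A. One consistency point needs checking: for $t<0$ the sign of $d_0^-(\underline{\ba},\underline{\bb})t^{z}$ must be positive, since $t_1>0$. This is automatic, because $d_0^-(\underline{\ba},\underline{\bb})t^{z}$ is the leading term of the positive quantity $t_1$. So writing $t_1^n$ as a power of that series is consistent. The conclusion is that the coefficient of $t^m x^{1-m}$ on the right-hand side equals $C_m[\overline{\pmb\gamma},\overline{\ba},\overline{\bb}]$ for $t>0$ and $C_m[\overline{\pmb\gamma},\underline{\ba},\underline{\bb}]$ for $t<0$.

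Next I would compare with the left-hand side branch by branch. For $t>0$ the equation reads $\overline{\gamma}_m=C_m[\overline{\pmb\gamma},\overline{\ba},\overline{\bb}]$. This is exactly the equation solved in the proof of Theorem \ref{thm-main-Gauss}, so its three cases (including the singular case $d_0^-=\pm2$ and the case $z=\infty$) apply verbatim with $\ba=\overline{\ba}$, $\bb=\overline{\bb}$. For $t<0$ the equation reads $\underline{\gamma}_m=C_m[\overline{\pmb\gamma},\underline{\ba},\underline{\bb}]$. The right-hand side now involves only the already known $\overline{\gamma}_n$, so it is the formula of Theorem \ref{thm-main-Gauss} with $\gamma_n=\overline{\gamma}_n$ inserted. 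This is precisely the statement: no self-reference and no division occurs in the lower branch. For $N<M$ the roles swap: $t_1<0$ always, so the lower branch of the compound is determined first by Theorem \ref{thm-main-Gauss}, and the upper branch is then computed from it.

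The main obstacle I expect is the first step, namely justifying that Theorem A transfers to branchwise expansions with the outer branch fixed by the sign of $t_1$. One must also handle $z(\underline{\ba},\underline{\bb})$ and $z(\overline{\ba},\overline{\bb})$ separately, since the two branches may have different first differing indices. I would argue this by inspecting the formal derivation of Theorem A in $\bR[[u]]$, which uses only the outer series in the variable $t_1/x_1$ on a fixed sign.
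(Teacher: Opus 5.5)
Your proposal is correct and follows the paper's proof. The paper's phrase ``because of the relation between $M$ and $N$'' is your observation that $t_1=\tfrac12(N-M)>0$ forces the upper branch $\overline{\gamma}_n$ on the right-hand side of \eqref{rel-MogN} for both signs of $t$; then $\overline{\gamma}_m=C_m[\overline{\pmb\gamma},\overline{\ba},\overline{\bb}]$ is solved via Theorem~\ref{thm-main-Gauss}, and $\underline{\gamma}_m=C_m[\overline{\pmb\gamma},\underline{\ba},\underline{\bb}]$ is read off from \eqref{coeff-gamma-gauss-general} with no self-reference. The paper, like you, takes the branchwise validity of Theorem A for granted at the formal level; your sign-consistency check is harmless but not actually needed, since only integer powers $t_1^n$ enter.
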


\begin{proof}
Let $M<N$. For $t>0$, the left-hand side of \eqref{rel-MogN} has coefficients 
$(\overline{\gamma}_n)_{n\in\bN_0}$, while the corresponding expansions on the right-hand side have coefficients $(\overline{\gamma}_n)_{n\in\bN_0}$, $(\overline{a}_n)_{n\in\bN_0}$, and $(\overline{b}_n)_{n\in\bN_0}$. Hence, $\overline{\gamma}_n$ can be obtained from \eqref{thm-gausscoeff-z>1} or \eqref{thm-gausscoeff-z=1}, depending on $z=z(\overline{\ba},\overline{\bb})$, by taking $\ba=\overline{\ba}$, $\bb=\overline{\bb}$, and $\gamma_n=\overline{\gamma}_n$.
For $t<0$, the left-hand side of \eqref{rel-MogN} has coefficients 
$(\underline{\gamma}_n)_{n\in\bN_0}$. Because of the relation between $M$ and $N$, the corresponding expansions on the right-hand side have coefficients $(\overline{\gamma}_n)_{n\in\bN_0}$, $(\underline{a}_n)_{n\in\bN_0}$, and $(\underline{b}_n)_{n\in\bN_0}$. Hence, $\underline{\gamma}_n$ can be obtained from \eqref{coeff-gamma-gauss-general} by taking $\gamma_n=\underline{\gamma}_n$ on the left-hand side, and $\ba=\underline{\ba}$, $\bb=\underline{\bb}$, and $\gamma_n=\overline{\gamma}_n$ on the right-hand side.
Now let $M>N$. First, for $t<0$, we compute the coefficients $\underline{\gamma}_n$ from \eqref{thm-gausscoeff-z>1} or \eqref{thm-gausscoeff-z=1}, depending on $z=z(\underline{\ba},\underline{\bb})$, by taking $\ba=\underline{\ba}$, $\bb=\underline{\bb}$, and $\gamma_n=\underline{\gamma}_n$. Then, to compute $\overline{\gamma}_n$ for $t>0$, we use formula \eqref{coeff-gamma-gauss-general}, with $\gamma_n=\overline{\gamma}_n$ on the left-hand side and $\ba=\overline{\ba}$, $\bb=\overline{\bb}$, and $\gamma_n=\underline{\gamma}_n$ on the right-hand side.
\end{proof}

\begin{thm}\label{thm-a-pm}
Assume that the means $M$, $N$, and $M \otimes_a N$ have asymptotic expansions of type \eqref{asymexp-M-abs-t} with sequences of coefficients $(a_n(t))_{n\in\bN_0}$, $(b_n(t))_{n\in\bN_0}$, and $(\alpha_n(t))_{n\in\bN_0}$, respectively. Then, the coefficients $\overline{\alpha}_n$ are obtained by applying Theorem \ref{thm-main-Arch} to the sequences $\ba=\overline{\ba}$ and $\bb=\overline{\bb}$, while the coefficients $\underline{\alpha}_n$ are obtained from the same theorem by taking $\ba=\underline{\ba}$ and $\bb=\underline{\bb}$.
\end{thm}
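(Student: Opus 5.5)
The plan is to reduce Theorem \ref{thm-a-pm} to Theorem \ref{thm-main-Arch}, branch by branch, in the same way Theorem \ref{thm-g-pm} was reduced to Theorem \ref{thm-main-Gauss}. The key difference from the Gaussian case is that no ordering hypothesis $M<N$ should be needed. The reason is that the Archimedean invariance equation \eqref{rel-MoaN} can be arranged so that, on each branch, both arguments of the outer compound stay on the same side of the diagonal as the original pair.

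Concretely, fix $t>0$ and write $s=x-t$, $w=x+t$, so $s<w$. Since $M$ is a mean, $s\le M(s,w)\le w$. The inner argument of $N^*(s,w)=N(M(s,w),w)$ is the pair $(M(s,w),w)$, whose first entry does not exceed its second. Writing it as $(x'-t',x'+t')$ with $x'=(M+w)/2$ and $t'=(w-M)/2\ge 0$, we see that $N$ is evaluated on its $t>0$ branch. Applying Theorem A with $\bb=\overline{\bb}$, $\ba=\overline{\ba}$, $\mathbf e$ then gives the branch series $\overline{\bc}$ of $N^*$.

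The degenerate possibility $t'=0$ needs a brief remark. It happens only when $\overline a_n$ coincides with the projection coefficients to every order. In that situation $N^*$ equals $M$ to all orders, and the case $z=\infty$ applies.

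Next I check on which branch the outer compound is evaluated in \eqref{rel-MoaN}, namely at the pair $(M(s,w),N^*(s,w))$. Here I use the observation that $N^*(s,w)$ lies between $M(s,w)$ and $w$, so $M(s,w)\le N^*(s,w)$. Hence the new half-gap $(N^*-M)/2$ is nonnegative, and $M\oa N$ on the right is again expanded with the $t>0$ coefficients $\overline{\alpha}_n$. So on the $t>0$ branch, \eqref{rel-MoaN} becomes exactly the formal invariance equation of Section \ref{section-arch}, with $\ba=\overline{\ba}$, $\bb=\overline{\bb}$ and $\a_n=\overline{\a}_n$ on both sides. Theorem \ref{thm-main-Arch}, whose proof is purely formal through Theorem A and Theorem \ref{thm-main-Gauss}, then yields $\overline{\a}_n$.

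For $t<0$ everything is symmetric. Now $w\le M(s,w)\le s$ and $w\le N^*(s,w)\le M(s,w)$, so every half-gap is nonpositive, and only underlined coefficients appear. This gives $\underline{\a}_n$ from Theorem \ref{thm-main-Arch} with $\ba=\underline{\ba}$ and $\bb=\underline{\bb}$.

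The main obstacle is a technical point. Theorem A was stated for two-sided expansions, and I need its branchwise form: composing with a half-gap $t'=t\cdot(\text{series in }t/x)$ whose leading factor has a fixed sign. I would argue that on a fixed branch $t^n$ never changes sign, so the computation in Theorem A goes through verbatim in $\bR[[u]]$ with the branch coefficients. This holds once the sign of $t'$ is constant, which is exactly what the betweenness observations above guarantee.
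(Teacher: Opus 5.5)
Your proposal is correct and follows essentially the same route as the paper: on the branch $t>0$ you use $M(x-t,x+t)\le x+t$ to place $N$ on its $\overline{\bb}$ branch inside $N^*$, then use $M\le N^*$ to place the outer compound on its $\overline{\alpha}$ branch in \eqref{rel-MoaN}, treat $t<0$ symmetrically, and apply Theorem \ref{thm-main-Arch}. Your extra remarks on the zero half-gap and on the branchwise validity of Theorem A make explicit what the paper uses tacitly; they do not change the argument.
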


\begin{proof}For $t>0$, the asymptotic expansion on the left-hand side of \eqref{rel-MoaN} has coefficients $\overline{\alpha}_n$. Furthermore, the mean $M$ in the composition $N^\ast(x-t,x+t) = N (M (x-t,x+t), x+t)$ has coefficients $\overline{a}_n$ in its asymptotic expansion. Since $M(x-t,x+t)\le x+t$, the mean $N$ in the same composition has coefficients $\overline{b}_n$. Also, since
$M (x-t,x+t)= \min(M (x-t,x+t),x+t) \le N (M (x-t,x+t), x+t)$, the mean $M\otimes_a N$ on the right-hand side of \eqref{rel-MoaN} has coefficients $\overline{\alpha}_n$. Hence, to obtain the coefficients in the asymptotic expansion of the Archimedean compound in this case, we may use Theorem \ref{thm-main-Arch} with $\alpha_n=\overline{\alpha}_n$, $\ba=\overline{\ba}$, and $\bb=\overline{\bb}$. If $t<0$, an analogous argument shows that we may use Theorem \ref{thm-main-Arch} with $\alpha_n=\underline{\alpha}_n$, $\ba=\underline{\ba}$, and $\bb=\underline{\bb}$.
\end{proof}

For demonstration, application of the previous theorems to the means $B_5$ and $B_6$ yields the following expansions:
\bes
B_5 \og B_6 \,(x-t,x+t)=x+\frac12 t^2x^{-1} + \frac38 t^4x^{-3} - \frac{1}{16} t^6x^{-5}+\cdots 
\ees
\bes 
B_5 \oa B_6 \,(x-t,x+t)=x+\frac13 |t| + \frac49 t^2x^{-1} + \frac{4}{27} |t|^3x^{-2}+\frac{116}{405}t^4x^{-3}+\cdots 
\ees
Since the definitions of $B_5$ and $B_6$ are completed by symmetry, their Gaussian compound $B_5\og B_6$ is also symmetric (it has only even powers of $t$) and therefore equals $B_6\og B_5$.

\begin{rem}\label{rem-branch} The exceptional cases of Theorems~\ref{thm-main-Gauss} and \ref{thm-main-Arch} are understood branchwise.

For Theorem~\ref{thm-g-pm}, if $M<N$ and the relevant first discrepancy satisfies
$z=1$, the positive branch requires $
d^-_0\neq\pm2.
$
The analogous condition is imposed on the negative branch when
$N<M$. If $d^-_0=\pm2$, the corresponding Gaussian formal invariance
equation is singular and the branch must be analysed separately.

For Theorem~\ref{thm-a-pm}, one first forms, on each branch, the coefficient
sequence $c$ of $
N^*(s,t)=N(M(s,t),t)
$
and then applies the appropriate case of the 
Theorem~\ref{thm-main-Arch} to the pair $(a,c)$. If their first discrepancy occurs
at an index greater than one, the recursion for $z(a,c)>1$ must be
used. An endpoint first coefficient $1$ or $-1$ does not permit the
higher coefficients of the branch to be discarded. 
The only singular linear Archimedean branch is the one for which the
corresponding pair of first coefficients is $(-1,1)$.
\end{rem}

\begin{rem}\label{rem-B7B8}
The exceptional pair $B_7,B_8$ can be analysed directly. Their definitions
give
\[
B_7(a,b)+B_8(a,b)=a+b.
\]
Hence the Gaussian iteration preserves the sum. If
\[
q_n=\frac{|M_n-N_n|}{M_n+N_n},
\]
then a direct calculation gives
\[
q_{n+1}=q_n\frac{|1-3q_n|}{1+q_n}.
\]
Thus $q_n\to0$, and the preserved sum shows that
\[
B_7\og B_8 \,(a,b)=\frac{a+b}{2}=A(a,b),
\qquad
B_7\og B_8 \,(x-t,x+t)=x.
\]
Except when the iteration terminates after finitely many steps, the local
relation $q_{n+1}=q_n-4q_n^2+\O(q_n^3)$ yields
$q_n\sim(4n)^{-1}$.

The Archimedean procedure is order-dependent. For $0<M_n<N_n$, put
$r_n=M_n/N_n$ and $e_n=1-r_n$. For $B_7\oa B_8$ one obtains
\[
e_{n+1}=\frac{e_n-e_n^2}{1+e_n-e_n^2}
=e_n-2e_n^2+\O(e_n^3),
\]
and therefore, the convergence is sublinear, with $e_n\sim(2n)^{-1}$. In the reversed compound $B_8\oa B_7$, the corresponding calculation
gives
\[
e_{n+1}=\frac{e_n^4}{1-e_n^2+e_n^4}=\O(e_n^4),
\]
so the local convergence is quartic. These relations also prove convergence
in both orders. At the branchwise formal level, the Gaussian pair has
$|a_1-b_1|=2$ and therefore lies on the singular boundary described in
Section \ref{section-formal}; the direct preserved identity determines the actual compound.
For the Archimedean procedure, the order $B_7\oa B_8$ has $q_a=1$ and gives
the slow rate above, whereas the reversed order has $q_a=0$ and quartic local
convergence. This example shows directly why the formal singularity and the
functional convergence question have to be treated separately.
\end{rem}

\section{Application to the compounds of the mean $M_{\alpha,r}$}

We now give a more substantial application of the branchwise expansions developed in the previous section.

In \cite{MihRais-2026}, the authors studied asymptotic properties of a new two-parameter class of means introduced in \cite{RaiRez-2019}, which is, for $a,b>0$, $a\neq b$, $r>0$, and $|\alpha|\le1$, defined by
\[
M_{\a,r}(a,b)=\frac{(r+\a)|b-a|}{(1+r|\log b-\log a|)^{1+\frac{\a}{r}}-1},
\]
and which includes some known classical means (e.g.\ the logarithmic mean), as well as other means that appear to be new. Direct expansion of the defining expression corrects the sign of the linear term in the expansion stated in \cite[Proposition 23]{MihRais-2026}:
\be\label{asymexp-Malphar}\begin{aligned}
M_{\alpha,r}(x-t,x+t) = & x-\alpha|t|+\tfrac{1}{3}(\alpha^2+2r\alpha-1)t^2x^{-1}-\tfrac{1}{3}r\alpha(2r+\alpha)|t|^3x^{-2}\\
& -\tfrac{1}{45}\left(\alpha(\alpha+2r)\left(\alpha^2-18r^2+2\alpha r-5\right)+4\right)t^4x^{-3}+\ldots 
\end{aligned}\ee

We finally apply Theorems \ref{thm-g-pm} and \ref{thm-a-pm} to the means $M_{\a,r}$ and
$M_{\b,s}$, as in the weighted-power-mean case in Section \ref{section-weightedPower}. We assume that
the relevant compounds exist and have expansions of type
\eqref{asymexp-M-abs-t}. For the Gaussian compound, assume additionally that
$\alpha\ne\beta$ and that the two means are comparable. 
The first terms of the Gaussian compound are:
\be\label{expansion-MarOGMbs}
\begin{aligned}
	M_{\a,r}&\og M_{\b,s} \,(x-t,x+t)
	={}x-\frac{\alpha+\beta}{2-|\alpha-\beta|}|t|+D t^2x^{-1}+\O(x^{-2}),\\
	D&=\frac{4\Bigl[(1+\beta\operatorname{sign}(\alpha-\beta))
	(\alpha^2+2r\alpha-1)
 {}+(1-\alpha\operatorname{sign}(\alpha-\beta))
	(\beta^2+2s\beta-1)\Bigr]}
 {3(2+|\alpha-\beta|)(2-|\alpha-\beta|)^2}.
\end{aligned}
\ee
For $0<|\alpha|<1$, from this expansion we obtain the following specific expansion:
\begin{align*}
	M_{\alpha,r}&\otimes_g M_{-\alpha,r} \,(x-t,x+t)
	= x-\frac13 t^{2}x^{-1}+\frac{|\alpha|^{2}r}{9(1-|\alpha|^{3})}\,|t|^{3}x^{-2} \\
	&\qquad +\left[-\frac{4}{45}+\frac{2|\alpha|^{2}r^{2}\bigl(4|\alpha|^{3}-19\bigr)}
	  {135\,(1-|\alpha|^{3})(1-|\alpha|^{4})}\right]\,t^{4}x^{-3}
	   +\mathcal{O}(x^{-4}).
\end{align*}
Coefficient comparison shows that $M_{\alpha,r}\otimes_g M_{-\alpha,r}$ and the logarithmic mean have  matching expansions through the term of order $x^{-1}$.

The case $\alpha=\beta$  is not covered by the formula \eqref{expansion-MarOGMbs}
above, since then $\overline{a}_1=\overline{b}_1=-\alpha$ and the first branch of
Theorem \ref{thm-main-Gauss} no longer applies. The first discrepancy between the
two coefficient sequences, with $r\neq s$, occurs at the second index,
\[
	\overline{a}_2-\overline{b}_2=\tfrac{2}{3}\alpha(r-s),
\]
so that $z(\overline{a},\overline{b})=2$ whenever $\alpha\neq0$ and $r\neq s$, and the
same holds for the negative branch. Applying \eqref{thm-gausscoeff-z>1} branchwise
gives $\underline{\gamma}_n=(-1)^n\overline{\gamma}_n$, and the compound may therefore
again be written in terms of $|t|$:
\begin{align*}
	M_{\alpha,r}\otimes_g M_{\alpha,s} \,(x-t,x+t)
	&= x-\alpha|t|+\gamma_2\,t^2x^{-1}+\gamma_3\,|t|^3x^{-2}+\mathcal{O}(x^{-3}),\\[2pt]
	\gamma_2&=\tfrac{1}{3}\bigl(\alpha^2-1+2q\alpha\bigr),\\[2pt]
	\gamma_3&=-\tfrac{1}{3}q\alpha(2q+\alpha)
	+\tfrac{1}{6}\alpha(\alpha^2-1)(r-s)^2,
\end{align*}
where
\[
	q=\tfrac{1}{2}\bigl(r+s-|\alpha|\,|r-s|\bigr).
\]

It is natural to ask whether this compound remains inside the same class. Since
$\gamma_1=-\alpha$, the first parameter is forced to be $\alpha$, and matching the
coefficient of $t^2x^{-1}$ against \eqref{asymexp-Malphar} determines the second
parameter uniquely as the above value of $q$.
Consequently, for $0<|\alpha|<1$ and $r\neq s$ the two expansions differ by
\[
	M_{\alpha,r}\otimes_g M_{\alpha,s} \,(x-t,x+t)-M_{\alpha,q}(x-t,x+t)
	=\frac{\alpha(\alpha^{2}-1)(r-s)^{2}}{6}\,|t|^{3}x^{-2}+\mathcal{O}(x^{-3}).
\]
The compound therefore does not belong to the class. Nevertheless,
$M_{\alpha,q}$ is the unique member of the family that matches the
maximal number of initial formal coefficients of the compound. Indeed,
the coefficients of $|t|$ and $t^2x^{-1}$ determine both parameters
uniquely, and no other choice of parameters can reproduce more initial
coefficients.

For $\alpha=\pm1$ the situation improves by one order. Here $q$ degenerates to a
single parameter of one of the two initial means, namely $q=\min(r,s)$, the
coefficient of $|t|^{3}x^{-2}$ matches as well, and
\[
	M_{\pm1,r}\otimes_g M_{\pm1,s} \,(x-t,x+t)-M_{\pm1,q}(x-t,x+t)
	=\pm\frac{2q\,(r-s)^{2}}{27}\,t^{4}x^{-3}+\mathcal{O}(x^{-4}),
\]
so that the two expansions agree up to and including the term of order $x^{-2}$.
Even in this extremal case the agreement remains asymptotic only: direct iteration
shows that the compound and $M_{\pm1,q}$ are different means.

For $q=0$, i.e.\ $\min(r,s)=0$, both sides degenerate: one of the two initial means is
$M_{\pm1,0}$, and the comparison mean $M_{\pm1,q}=M_{\pm1,0}$ is the same
object. It is easy to see that
\be\label{formula-Malphar-minmax}
	M_{1,0}(s,t)=\min(s,t),\quad  M_{-1,0}(s,t)=\max(s,t),
\ee
and since $\min\bigl(M_{1,r}(s,t),\min(s,t)\bigr)=\min(s,t)$ and
$\max\bigl(M_{-1,r}(s,t),\max(s,t)\bigr)=\max(s,t)$, the Gaussian iteration is
stationary already after the first step. Hence
\[\begin{split}
	M_{1,r}\og M_{1,0} \,(s,t)&=M_{1,0} (s,t)=\min(s,t),\\
	M_{-1,r}\og M_{-1,0} \,(s,t)&=M_{-1,0}(s,t)=\max(s,t),
\end{split}\]
so in this degenerate case the two means coincide as functions, and not merely
to the order of the displayed expansions.

The Archimedean compound is given by the formula:
\bes
M_{\alpha,r} \otimes_a M_{\beta,s} \,(x-t,x+t) = x +
\frac{1-\operatorname{sign}(t)\,(3\alpha+\beta)-\alpha\beta}
     {3-\operatorname{sign}(t)\,(\alpha-\beta)+\alpha\beta}\,t
+\mathcal{O}(x^{-1}), \qquad t\neq0.
\ees

The displayed formula is valid on every nonsingular branch. In
particular, it does not apply to the positive branch for
$(\alpha,\beta)=(1,-1)$ or to the negative branch for
$(\alpha,\beta)=(-1,1)$. Consequently, the coefficient comparison above does not yield a
two-sided conclusion for these parameter pairs.

The limiting case $r\to0$ of this family is defined by
\[
	M_{\alpha,0}(a,b)=\frac{\alpha\,|b-a|}{\exp\bigl(\alpha|\ln b-\ln a|\bigr)-1},
	\qquad M_{0,0}=L .
\]
Lemma 2.2 in \cite{ElVu-2014-04} is applicable here, exactly as it was applied there
to the generalized logarithmic mean at $r=-1$ and $r=0$. Indeed, for fixed $t$ the
function $z\mapsto M_{\alpha,r}(z-t,z+t)$ is analytic on $\{|z|>R\}$ and its
asymptotic expansion is the Laurent expansion of that analytic continuation; the
coefficients in \eqref{asymexp-Malphar} are polynomials in $r$, so they converge as
$r\to0$; and $M_{\alpha,r}\to M_{\alpha,0}$ pointwise, since
$(1+r\ell)^{1+\alpha/r}\to e^{\alpha\ell}$ with $\ell=|\ln b-\ln a|$. All four
hypotheses of the lemma are therefore satisfied, and the coefficients in the
asymptotic expansion of $M_{\alpha,0}$ are obtained by letting $r\to0$ in
\eqref{asymexp-Malphar}; the same applies to $M_{\beta,0}$ as $s\to0$.

For $\alpha=\pm1$ this limiting formula collapses. Based on \eqref{formula-Malphar-minmax},
on the branch $t>0$ their coefficient sequences are $(1,-1,0,0,\dots)$ and
$(1,1,0,0,\dots)$; on the branch $t<0$ the two sequences are interchanged, which is
precisely the situation described in Remark \ref{rem-branch}, where an extreme first
branch coefficient does not correspond to a two-sided expansion of type
\eqref{asymexp-M}.

The endpoint means must be handled according to the order of the Archimedean
procedure. If $M$ is a continuous strict mean, the compound of $M$ with the minimum equals the smaller of  $M(s,t)$ and $t$, while the compound of the maximum with $M$ is the maximum. Thus, for every admissible $\alpha$, $\beta$ and every
$r,s>0$,
\[
M_{\alpha,r}\oa M_{1,0} \,(s,t)=\min\bigl(M_{\alpha,r}(s,t),t\bigr),
\qquad
M_{-1,0}\oa M_{\beta,s} \,(s,t)=M_{-1,0} (s,t).
\]

Within this family, coefficient comparison did not produce general analogues of
Corollary \ref{power-cor}, although some configurations come very close: the
candidate parameters are determined by the first two non-trivial coefficients, and
the resulting mean then reproduces three or four coefficients of the compound, so it
provides the best approximation to it available within the family. A general result
of the type of Corollary \ref{power-cor} is nevertheless out of reach here. While
matching coefficients do not by themselves establish an identity of means, a single
mismatching coefficient does establish that the means are different, and this is
what occurs in each configuration considered. The exact endpoint formulas obtained
above involve $\min$ or $\max$. This illustrates
the constructive use of the algorithms announced in Section \ref{section-weightedPower}:
they are effective both for producing candidates and for ruling them out.

\section*{\normalsize Acknowledgements.}
This work was supported by the Croatian Science Foundation
(HRZZ) under the project UIP-2025-02-8956.


\end{document}